\documentclass[11pt]{amsart}
\usepackage[utf8]{inputenc}
\usepackage[T1]{fontenc}
\usepackage{amsmath,amssymb,amsthm}
\usepackage{graphicx}
\usepackage[margin=2.7cm]{geometry}
\usepackage{hyperref}

\newtheorem{theorem}{Theorem}
\newtheorem{proposition}[theorem]{Proposition}
\newtheorem{corollary}[theorem]{Corollary}

\theoremstyle{remark}
\newtheorem{remark}[theorem]{Remark}

\begin{document}

\title[Minimal translation hypersurfaces in hyperbolic space]{Minimal translation hypersurfaces in hyperbolic space}

\author{Ameth Ndiaye}
\address{D\'epartement de Math\'ematiques, FASTEF, Universit\'e Cheikh Anta Diop, Dakar, S\'en\'egal}
\email{ameth1.ndiaye@ucad.edu.sn}

\subjclass[2020]{53A10, 53C42}
\keywords{Hyperbolic space, minimal hypersurface, singular minimal hypersurface, translation hypersurface, $\alpha$-catenary, Scherk surface}

\begin{abstract}
In the upper half-space model of the hyperbolic space $\mathbb{H}^{n+1}$, a translation hypersurface of type I is a graph $x_{n+1}=f_1(x_1)+\cdots+f_n(x_n)$ and a translation hypersurface of type II is a graph $x_n=f_1(x_1)+\cdots+f_{n-1}(x_{n-1})+g(x_{n+1})$. We classify the minimal translation hypersurfaces of both types for every $n\geq 2$. We obtain these results as particular cases of a classification of the singular minimal translation hypersurfaces of $\mathbb{R}^{n+1}$ for an arbitrary exponent $\alpha\neq 0$, with respect to a direction orthogonal (type I) or parallel (type II) to the hyperplane over which the hypersurface is a graph. The minimal translation hypersurfaces of type I are exactly the parabolic cylinders over a one-parameter family of arches, which contradicts a nonexistence theorem in the literature. The minimal translation hypersurfaces of type II are the parabolic cylinders, the vertical totally geodesic hyperplanes and, when $n\geq 3$, the vertical cylinders over the Scherk surface.
\end{abstract}

\maketitle
\section{Introduction}

Let $\mathbb{R}^{n+1}_+=\{(x_1,\ldots,x_{n+1})\in\mathbb{R}^{n+1} : x_{n+1}>0\}$ be the upper half-space model of the hyperbolic space $\mathbb{H}^{n+1}$, with metric $(dx_1^2+\cdots+dx_{n+1}^2)/x_{n+1}^2$. The coordinates $x_1,\ldots,x_n$ play the same role while $x_{n+1}$ is distinguished, and this led L\'opez \cite{Lo11} for $n=2$, and Seo \cite{Se13} for $n\geq 2$, to consider two classes of translation hypersurfaces.

A hypersurface of $\mathbb{H}^{n+1}$ is a \emph{translation hypersurface of type I} if it is locally given by $x_{n+1}=f_1(x_1)+\cdots+f_n(x_n)$, and a \emph{translation hypersurface of type II} if it is locally given by $x_n=f_1(x_1)+\cdots+f_{n-1}(x_{n-1})+g(x_{n+1})$, where $f_1,\ldots,f_n,g$ are smooth functions of one variable.

Type I hypersurfaces are graphs over the horosphere $\{x_{n+1}=1\}$ and type II hypersurfaces are graphs over the totally geodesic hyperplane $\{x_n=0\}$. The question is to find the minimal ones, in analogy with the classical theorem of Scherk in $\mathbb{R}^3$ and its extension to $\mathbb{R}^{n+1}$ by Dillen, Verstraelen and Zafindratafa \cite{DVZ}.

Minimal translation surfaces in $\mathbb{H}^3$ were introduced and first studied by L\'opez \cite{Lo11}. For type II, following the method of \cite{Lo11}, Seo obtained a one-parameter family of minimal surfaces $y=ax+b+g(z)$ which are not totally geodesic \cite[Theorem 3.3]{Se13}. For type I, L\'opez classified the singular minimal surfaces $z=f(x)+g(y)$ with respect to the vertical direction: they are the $\alpha$-catenary cylinders \cite[Theorem 4]{Lo18}, the proof given for $\alpha=1$ in \cite{Lo16} extending directly to every $\alpha$. Since the minimal surfaces of $\mathbb{H}^3$ are the singular minimal surfaces with $\alpha=-2$ \cite[p.~522]{Lo18}, this result gives the minimal translation surfaces of type I in $\mathbb{H}^3$, namely the $(-2)$-catenary cylinders. Up to a rotation about a vertical axis, these cylinders also appear in the recent work of Ferreira and dos Santos \cite[Theorem 3.1]{FS} on translation surfaces for the Lie group structure of $\mathbb{H}^3$.

In higher dimension, Seo \cite[Theorem 3.2]{Se13} states that there is no minimal translation hypersurface of type I in $\mathbb{H}^{n+1}$; the case $n=2$ is referred to \cite{Lo11}, and the statement is quoted in \cite[Theorem 3]{LSSS}. For $n\geq 3$, the proof begins by showing that $f_i''\neq 0$ for every $i$: if $f_j''=0$, then $f_j(x_j)=ax_j+b$, and it is noted that $a\neq 0$ ``since $M$ is a graph''. The case $a=0$, where $f_j$ is constant, is not excluded by this remark, and it is precisely the case where minimal examples exist. As far as we know, minimal translation hypersurfaces of type II in $\mathbb{H}^{n+1}$, $n\geq 3$, have not been studied as such; however, as explained below, they are contained in the setting of \cite[Theorem 4.1]{EEA}.

Both problems are particular cases of a question on singular minimal hypersurfaces. Given $\alpha\in\mathbb{R}$ and a unit vector $\mathbf{a}\in\mathbb{R}^{n+1}$, a hypersurface $M$ of the half-space $\{p : \langle p,\mathbf{a}\rangle>0\}$ with unit normal $N$ is an \emph{$\alpha$-singular minimal hypersurface} with respect to $\mathbf{a}$ if its mean curvature $H$ (the trace of the shape operator divided by $n$) satisfies
\begin{equation}\label{eq-sm}
nH(p)=\alpha\,\frac{\langle N(p),\mathbf{a}\rangle}{\langle p,\mathbf{a}\rangle},\qquad p\in M.
\end{equation}
These hypersurfaces are the critical points of the potential energy $\int_M\langle p,\mathbf{a}\rangle^\alpha\,dM$ \cite{Di03}, and they appear in several contexts.

From the physical point of view, when $\mathbf{a}$ points upwards and $\alpha=1$, the energy is the gravitational potential energy of a homogeneous heavy surface, and \eqref{eq-sm} describes the surfaces of least potential energy with prescribed boundary. These surfaces are the two-dimensional analogue of the catenary, studied by B\"ohme, Hildebrandt and Tausch \cite{BHT}; for general $\alpha$ and in higher dimension, existence, regularity and nonexistence results were obtained by Bemelmans and Dierkes \cite{BD}, Dierkes and Huisken \cite{DH} and Dierkes \cite{Di03,Di19}. The case $\alpha=1$ is also related to the shape of cupolas and domes in architecture \cite{Lo21}.

From the geometric point of view, since the area element of a hypersurface for the conformal metric $\langle p,\mathbf{a}\rangle^{2\alpha/n}\langle\cdot,\cdot\rangle$ is $\langle p,\mathbf{a}\rangle^{\alpha}\,dM$, the $\alpha$-singular minimal hypersurfaces are exactly the minimal hypersurfaces of the half-space $\{\langle p,\mathbf{a}\rangle>0\}$ endowed with this conformally flat metric. Equivalently, they are the weighted minimal hypersurfaces of $\mathbb{R}^{n+1}$ for the density $\langle p,\mathbf{a}\rangle^{\alpha}$. The parameter $\alpha$ thus gives a one-parameter family of geometric problems containing the classical minimal hypersurfaces ($\alpha=0$) and, for $\mathbf{a}=e_{n+1}$ and $\alpha=-n$, the minimal hypersurfaces of the hyperbolic space, whose Dirichlet problem at infinity has been studied by Anderson \cite{An} and Lin \cite{Li}. Working with a general $\alpha$ makes clear which properties of the hyperbolic case depend only on the structure of equation \eqref{eq-sm}, and it gives the hyperbolic and the Euclidean weighted problems at once.

Since the hyperbolic mean curvature $H_h$ of a hypersurface of $\mathbb{R}^{n+1}_+$ is related to its Euclidean mean curvature $H$ by
\begin{equation}\label{eq-hyp}
H_h=x_{n+1}H+N_{n+1},
\end{equation}
a hypersurface is minimal in $\mathbb{H}^{n+1}$ if and only if it is $(-n)$-singular minimal with respect to $\mathbf{a}=e_{n+1}$ \cite[Remark 1.3]{Di19}, see also \cite[p.~522]{Lo18} for $n=2$. Type I hypersurfaces correspond to the case where $\mathbf{a}$ is orthogonal to the hyperplane over which $M$ is a graph, and type II hypersurfaces to the case where $\mathbf{a}$ is parallel to it. For $n=2$, both cases are solved in \cite[Theorems 4 and 5]{Lo18}. For $n\geq 3$, the first case is stated as an open problem by Erdur, Erg\"ut and Ayd\i n \cite[Section 6]{EEA}, and the second one is the subject of \cite[Theorem 4.1]{EEA}.

For $\alpha\neq 0$, an \emph{$\alpha$-catenary} is a planar curve which is the graph of a non-constant positive solution of
\begin{equation}\label{eq-cat}
h\,h''=\alpha\,(1+h'^2).
\end{equation}
Our first result gives the singular minimal translation hypersurfaces with respect to the vertical direction.

\begin{theorem}\label{th-A}
Let $n\geq 2$, $\alpha\neq 0$ and let $M$ be the graph of $x_{n+1}=f_1(x_1)+\cdots+f_n(x_n)>0$. Then $M$ is an $\alpha$-singular minimal hypersurface with respect to $e_{n+1}$ if and only if, after a permutation of the coordinates, $f_2,\ldots,f_n$ are constant and $h=f_1+\cdots+f_n$ is a solution of \eqref{eq-cat}. In other words, $M$ is an open subset of a cylinder over an $\alpha$-catenary whose rulings are orthogonal to $e_{n+1}$.
\end{theorem}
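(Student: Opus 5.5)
The plan is to turn the singular minimal equation \eqref{eq-sm} for the graph $u=f_1(x_1)+\cdots+f_n(x_n)$ into an ODE-type identity and then separate variables. Take $N=(-\nabla u,1)/W$ with $W=\sqrt{1+|\nabla u|^2}$, so that $nH=\operatorname{div}(\nabla u/W)$. Since $u_{ij}=0$ for $i\neq j$, \eqref{eq-sm} becomes
\begin{equation*}
u\sum_{i=1}^n f_i''\Bigl(1+\sum_{j\neq i}f_j'^2\Bigr)=\alpha\Bigl(1+\sum_{j=1}^n f_j'^2\Bigr). \tag{$\ast$}
\end{equation*}
For $n=1$ this is exactly \eqref{eq-cat}, which fixes the sign convention. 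The ``if'' direction is then immediate: if $f_2,\ldots,f_n$ are constant, $(\ast)$ reduces to $h h''=\alpha(1+h'^2)$ with $h=u$.

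For the converse I will show that at most one $f_i$ is non-constant on any open set. Not all $f_i$ can be constant, since then the left side of $(\ast)$ vanishes while the right side is $\alpha\neq0$. I split the indices into those with $f_i''\equiv0$ locally, so $f_i=a_ix_i+b_i$, and those with $f_i''\neq0$.

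\emph{Linear but non-constant factors.} Suppose some $f_k=a_kx_k+b_k$ with $a_k\neq0$. Then $x_k$ enters $(\ast)$ only through $u$, since $a_k$ is a constant and the coefficients of $u$ do not involve $x_k$. Differentiating $(\ast)$ in $x_k$ gives $a_k\sum_i f_i''(1+\sum_{j\ne i}f_j'^2)=0$. Substituting this back into $(\ast)$ makes the left side zero, forcing $\alpha=0$, a contradiction. So every linear $f_k$ is constant. This is precisely the case that the nonexistence argument in the literature mishandled.

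\emph{At most one non-affine factor.} It remains to show that no two indices, say $1$ and $2$, can both have $f_1''\neq0$ and $f_2''\neq0$; this is the main obstacle. I would write $(\ast)$ as $uA=\alpha W^2$ and treat everything as functions of the independent variables $x_1,x_2$, with the others frozen. I would then eliminate $u$ by applying $\partial_1$ and $\partial_2$, which gives
\begin{equation*}
f_i'A+u\,\partial_iA=2\alpha f_i'f_i''\quad (i=1,2).
\end{equation*}
Combining these with $(\ast)$ to remove $u$ yields an identity $f_i'A\,\alpha W^2+\alpha W^2\,\ldots$ that is polynomial in $f_1',f_1'',f_1''',f_2',f_2'',f_2'''$, with coefficients depending only on the frozen variables.

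Next I would divide by suitable products, such as $f_1'f_2'f_1''f_2''$, and differentiate once more in $x_1$ and in $x_2$ to separate the variables, following L\'opez's two-dimensional argument. The expected outcome is either $f_1''=0$ or $f_2''=0$, or a relation such as $f_1''=c(1+f_1'^2)$ that, substituted back into $(\ast)$, makes the coefficient of $u$ independent of $x_1$ while $u$ is not. Either way the assumption $f_1'',f_2''\neq0$ leads to a contradiction. I expect this case analysis, where several derivative coefficients may vanish on subsets, to be the delicate part. I would handle it on open dense sets and conclude by continuity.

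Once all but one $f_i$ are known to be constant, $(\ast)$ reduces to \eqref{eq-cat} for $h=u$ as a function of $x_1$ alone. This shows that $M$ is a cylinder over an $\alpha$-catenary with horizontal rulings.
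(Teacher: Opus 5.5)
\paragraph{The gap.} Your set-up of $(\ast)$, the converse, the exclusion of $k=0$ and the argument eliminating affine non-constant factors are correct; the last one is the paper's Step~2. The gap is the part you call the main obstacle. Showing that two non-affine $f_i$ cannot coexist is only announced (``the expected outcome is\ldots''), and the strategy you propose cannot succeed as stated.

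\paragraph{Why slicing fails.} Freeze $x_3,\ldots,x_n$ while some frozen $f_i$ is non-affine. Then the slice of $(\ast)$ is not L\'opez's two-dimensional equation. The $1$ is replaced by $E=1+\sum_{i\geq3}f_i'^2$, and an extra term $\gamma(f_1'^2+f_2'^2)+\delta$, with $\gamma=\sum_{i\geq3}f_i''$, appears. This slice equation does admit solutions with $f_1''f_2''\neq0$. For example, take $n=3$, $\alpha\geq\tfrac12$, $\beta>0$, and suppose that at some $t_0$
\[
f_3'(t_0)^2=\alpha-\tfrac12,\qquad f_3''(t_0)=-\beta,\qquad f_3(t_0)=\frac{2\alpha+1}{4\beta}.
\]
Then $f_1=\beta x_1^2/2$ and $f_2=\beta x_2^2/2$ satisfy $(\ast)$ identically in $(x_1,x_2)$ on the slice $x_3=t_0$: the coefficient of $u$ reduces to the constant $2\alpha\beta$. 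So no elimination in $x_1,x_2$ that treats the frozen quantities as constants can yield $f_1''=0$, $f_2''=0$ or a contradiction. You must use how these quantities vary with $x_3$.

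\paragraph{How the paper handles it.} The paper does this before any two-variable argument.
\begin{itemize}
\item Step~3 rules out quadratic $f_i$ when $k\geq2$ by a degree count in $x_i$ on the full equation. The coefficient of $x_i^4$ in $FB$ is $\beta_i^3\sigma_i/2$, with $\sigma_i=\sum_{l\in J,\,l\neq i}b_l$, and this must vanish for all values of the other variables. In the example, $\sigma_1=b_2+b_3$ vanishes only at $x_3=t_0$, which is exactly why the slice cannot see the contradiction.
\item Step~4 rules out $k\geq3$ by applying $\partial_1\partial_2\partial_3$ to \eqref{eq-I}. This annihilates $W^2$ and $\partial_1\partial_2\partial_3B$, leaving $v_1(u_2+u_3)+v_2(u_3+u_1)+v_3(u_1+u_2)=0$ with $u_i=b_i'/(a_ib_i)$ and $v_i=1/b_i$. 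Separation of variables then gives $b_i'=0$, contradicting Step~3.
\end{itemize}

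\paragraph{The two-index case.} Only once exactly two $f_i$ are non-constant and the rest constant is the problem really L\'opez's. Even there, your guessed mechanism does not work. If $f_1''=c(1+f_1'^2)$, the coefficient of $u$ becomes $(1+f_1'^2)\bigl(c(1+f_2'^2)+f_2''\bigr)$, which still depends on $x_1$. The paper's Step~5 proceeds as follows:
\begin{itemize}
\item Divide by $(1+f'^2)(1+g'^2)$ to get $(c+f+g)(\varphi+\psi)=\alpha(A_1+A_2-A_1A_2)$.
\item Apply $\partial_1\partial_2$. The factor $c+f+g$ drops out and one obtains $\varphi'/f'+\psi'/g'=-4\alpha(\varphi A_1)(\psi A_2)$.
\item Apply $\partial_1\partial_2$ again to get $(\varphi A_1)'(\psi A_2)'=0$.
\item Hence $f''/(1+f'^2)^2$ is constant, which forces $f''$ constant and then $f''=0$.
\end{itemize}
What makes the separation work is that mixed partial derivatives annihilate sums of one-variable functions such as $F$ and $W^2$. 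Your route eliminates $u$ via $u=\alpha W^2/A$, which brings in third derivatives, and you leave the separation itself undone.
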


Our second result gives the singular minimal translation hypersurfaces with respect to a direction contained in the hyperplane over which they are graphs. We use the coordinates $(x_1,\ldots,x_{n-1},y,z)$ of $\mathbb{R}^{n+1}$, so that $\mathbf{a}=e_{n+1}$ is the direction of $z$.

\begin{theorem}\label{th-B}
Let $n\geq 2$, $\alpha\neq 0$ and let $M$ be the hypersurface $y=f_1(x_1)+\cdots+f_{n-1}(x_{n-1})+g(z)$, $z>0$. Then $M$ is an $\alpha$-singular minimal hypersurface with respect to $e_{n+1}$ if and only if one of the following holds:
\begin{enumerate}
\item $g$ is constant and $y=f_1(x_1)+\cdots+f_{n-1}(x_{n-1})$ is a minimal translation hypersurface of $\mathbb{R}^n$. Then $M$ is an open subset of $M_0\times\mathbb{R}e_{n+1}$, where $M_0\subset\{z=0\}\cong\mathbb{R}^n$ is a hyperplane or, when $n\geq 3$, a cylinder $\Sigma\times\mathbb{R}^{n-3}$ over a Scherk minimal translation surface $\Sigma$.
\item $f_i(x_i)=m_ix_i+d_i$ for $1\leq i\leq n-1$, and, with $K=1+\sum_i m_i^2$, the function $g$ is non-constant and satisfies
\[
\frac{g'^2}{K+g'^2}=\lambda\, z^{-2\alpha},\qquad\lambda>0.
\]
Then $M$ is an open subset of a cylinder over an $\alpha$-catenary whose rulings are orthogonal to $e_{n+1}$.
\end{enumerate}
\end{theorem}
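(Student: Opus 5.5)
Write $M$ as the zero set of $F=y-\sum_i f_i(x_i)-g(z)$ and use $\nabla F$ as the normal. The equation \eqref{eq-sm} for a level set is
\[
\operatorname{div}\Big(\frac{\nabla F}{|\nabla F|}\Big)=\pm\alpha\,\frac{\langle \nabla F,e_{n+1}\rangle}{z\,|\nabla F|},
\]
up to the sign convention. Set $W=1+\sum_i f_i'^2+g'^2$ and multiply through by $W^{3/2}$. This gives the polynomial identity
\begin{equation*}
W\Big(\sum_i f_i''+g''\Big)-\Big(\sum_i f_i'^2f_i''+g'^2g''\Big)=\frac{\alpha}{z}\,g'\,W,
\end{equation*}
with the sign of $\alpha$ fixed by the orientation. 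The only $z$-dependent quantities are $g$ and its derivatives. Write $u_i=f_i'$, $p=g'$, and $W=1+\sum u_i^2+p^2$. The equation becomes
\[
\sum_i f_i''\big(W-u_i^2\big)+g''\big(W-p^2\big)=\frac{\alpha}{z}pW .
\]
This is a separated-variables identity, and the plan is to exploit it by differentiating in two distinct variables.

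\textbf{Case $g$ constant.} Then $p=0$ and the right-hand side vanishes. The equation is exactly the Euclidean minimal-graph equation for $y=\sum f_i(x_i)$ in $\mathbb{R}^n$, and $M$ is the product with the $z$-line. By Scherk's theorem in dimension three and its extension by Dillen--Verstraelen--Zafindratafa \cite{DVZ}, $M_0$ is a hyperplane or a cylinder over a Scherk surface. This gives case (1). The converse in this case is immediate, since a vertical cylinder has $N_{n+1}=0$ and $H=0$.

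\textbf{Case $g$ non-constant.} I first show that every $f_i''$ vanishes. Differentiate the identity in $x_j$ and in $z$ (the mixed derivative $\partial_{x_j}\partial_z$). Only the terms coupling $u_j$ with $p$ survive, which yields
\[
\partial_{x_j}\partial_z:\quad 2u_jf_j''\Big(g''+\frac{\alpha}{z}p\Big)'\cdot(\ldots)=0 .
\]
Concretely, the terms involving both variables are $f_j''\,p^2+g''u_j^2-\frac{\alpha}{z}p\,u_j^2$, so the mixed derivative is
\[
(f_j'')'\,(p^2)'+(u_j^2)'\Big(g''-\frac{\alpha}{z}p\Big)'=0 .
\]
Suppose $f_j''\neq0$ on an open set. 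Separating variables gives $(p^2)'=c\,(g''-\alpha p/z)'$ together with $f_j'''=-2cu_jf_j''$. Substituting back into the full identity and differentiating in $x_j$ alone should force a contradiction. I expect this to happen because a nontrivial $f_j$ would make $M$ a Scherk-like piece, while the term $\alpha p/z$, with $\alpha\neq0$, introduces explicit $z$-dependence incompatible with the translation structure. I expect this elimination to be the main obstacle. My first attempt will use the analogous mixed-derivative bookkeeping from the proof of Theorem~\ref{th-A}, which handles $\alpha\neq 0$ with the same structure. There are also two places where the argument must divide by something. If $p'=0$ locally, then $g$ is linear and the right-hand side $\alpha pW/z$ must be balanced by $z$-independent terms, which is impossible unless $p=0$. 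If $u_j=0$, then $f_j$ is locally constant and $f_j''=0$ already.

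Once every $f_i''=0$, set $f_i=m_ix_i+d_i$ and $K=1+\sum m_i^2$. The identity reduces to the ODE
\[
K g''=\frac{\alpha}{z}\,p\,(K+p^2),
\]
that is, $\dfrac{p'}{p(K+p^2)}=\dfrac{\alpha}{Kz}$. Integrating gives
\[
\tfrac12\log\frac{p^2}{K+p^2}=\alpha\log z+\text{const},
\]
which is the stated relation, with the sign of the exponent fixed by the orientation convention. Consequently $\lambda>0$. Geometrically, after a rotation in the $(x,y)$-variables, $M$ is a cylinder over the curve $Y=g(z)/\sqrt K$ in a vertical plane. Inverting to $z=h(Y)$ converts the ODE into \eqref{eq-cat}. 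Reversing these computations proves the converse.
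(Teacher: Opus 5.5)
\textbf{The central step is missing.} Your treatment of the case $g$ constant, of the case $g'$ constant, and of the ODE once all $f_i$ are affine is fine. Your mixed derivative $\partial_{x_j}\partial_z$ is also correct: it is the $z$-derivative of the identity $U_i+S_iP+K_iR=0$ that the paper uses. The gap is that the heart of the theorem, that a non-constant $g$ forces every $f_i$ to be affine, is only announced. The reason you give for expecting it is also not correct.

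\textbf{Carrying the separation through.} Divide by $(p^2)'$, not by $R'$: your normalization $(p^2)'=c\,R'$ excludes $R$ constant, and that turns out to be the critical case. A non-affine $f_j$ then gives $R=mp^2+r$. Substituting back gives $U+rK=c_1$, $S+mK=c_2$, and the relation $\nu p^3=z(c_1+c_2p^2)$ with $\nu=-\alpha$ (in the paper's sign convention), in addition to the ODE for $p$.

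\textbf{The conditions on $g$ alone are not contradictory.} For $c_1=0$ they are solved by $p=c_2z/\nu$ with $m=0$. For $c_1\neq0$, their compatibility forces $\nu=1/3$, i.e.\ $\alpha=-1/3$. The paper checks this with a resultant; you could instead parametrize $z=\nu p^3/(c_1+c_2p^2)$ and compare coefficients in $p^2$. At $\nu=1/3$, $p=(3c_1z)^{1/3}$ with $c_2=m=r=0$ is a genuine solution. So the factor $\alpha p/z$ does not by itself rule out a non-affine $f_j$; at $\alpha=-1/3$ the $z$-side is consistent even with $c_1\neq0$.

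\textbf{Where the contradiction actually comes from.} It comes only from the horizontal identities. In both surviving cases $m=0$, so $S$ is constant, and each $f_i''$ is a constant $\beta_i$. Then one would need $\sum_i\beta_i f_i'^2=(c_2/\nu)K$ with $c_2\neq0$, respectively $\sum_i\beta_if_i'^2=-c_1$ with $\sum_i\beta_i=0$ and $c_1\neq0$. Both are impossible as polynomial identities in the $x_i$.

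The bookkeeping of Theorem~\ref{th-A} does not transfer either. There the weight $F$ involves all the variables; here the weight $z$ involves only the variable of $g$. This case analysis is the substance of the proof, and your proposal does not contain it.

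\textbf{A smaller point: the sign.} Equation \eqref{eq-sm} is invariant under $N\mapsto -N$, so its sign is not an orientation choice. Take the convention for $H$ under which the profile curves satisfy \eqref{eq-cat}. Then $\operatorname{div}(\nabla F/|\nabla F|)=-nH$ for your $F$, so your identity should carry $-\alpha$ in place of $\alpha$. In case (2) it becomes $zKg''=-\alpha g'(K+g'^2)$, whose integration gives $\lambda z^{-2\alpha}$, not $\lambda z^{2\alpha}$.
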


For $n=2$, after a relabelling of the coordinates, Theorem \ref{th-B} is the classification of L\'opez for a horizontal vector \cite[Theorem 5]{Lo18}. For $n\geq 3$, it corresponds, after a relabelling of the coordinates, to \cite[Theorem 4.1]{EEA}, whose statement contains the hyperplanes and the $\alpha$-catenary cylinders. In the proof of \cite[Theorem 4.1]{EEA}, the case corresponding to $g$ constant is not considered, because the hypersurface is then minimal. This case contains the cylinders over the Scherk surface, which are $\alpha$-singular minimal for every $\alpha$ because $H=0$ and $\langle N,e_{n+1}\rangle=0$, and which should therefore be added to the statement. Our proof of Theorem \ref{th-B} is independent of \cite{EEA}.

Taking $\alpha=-n$, we obtain the classification in hyperbolic space. For $c>0$, let $h_c$ be the maximal solution of
\begin{equation}\label{eq-cyl}
h\,h''=-n\,(1+h'^2),\qquad h(0)=c^{1/(2n)},\quad h'(0)=0,
\end{equation}
and let $\mathcal{P}^n_c=\{(x_1,\ldots,x_n,h_c(x_n))\}\subset\mathbb{H}^{n+1}$. This hypersurface is invariant by the $(n-1)$-dimensional group of parabolic isometries $(x_1,\ldots,x_{n+1})\mapsto(x_1+t_1,\ldots,x_{n-1}+t_{n-1},x_n,x_{n+1})$, and we call it a \emph{minimal parabolic cylinder}.

\begin{corollary}\label{co-I}
A translation hypersurface of type I in $\mathbb{H}^{n+1}$, $n\geq 2$, is minimal if and only if it is, up to a permutation of $x_1,\ldots,x_n$ and a horizontal translation, an open subset of a minimal parabolic cylinder $\mathcal{P}^n_c$. For $n=2$, this is \cite[Theorem 4]{Lo18} with $\alpha=-2$, which completes \cite[Theorem 1.2]{Lo11}. For $n\geq 3$, it shows that the statement of \cite[Theorem 3.2]{Se13} has to be corrected.
\end{corollary}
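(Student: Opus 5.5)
The plan is to deduce the corollary directly from Theorem \ref{th-A} with $\alpha=-n$, using the equivalence recalled after \eqref{eq-hyp}: a hypersurface of $\mathbb{R}^{n+1}_+$ is minimal in $\mathbb{H}^{n+1}$ if and only if it is $(-n)$-singular minimal with respect to $e_{n+1}$. A translation hypersurface of type I is a graph $x_{n+1}=f_1(x_1)+\cdots+f_n(x_n)>0$. Theorem \ref{th-A} then says that it is minimal if and only if, after a permutation of the coordinates, $f_1,\dots,f_{n-1}$ are constant and $h=f_1+\cdots+f_n$, viewed as a function of the remaining variable, is a non-constant positive solution of $h\,h''=-n(1+h'^2)$. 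Here I have permuted so that the distinguished variable is $x_n$, to match $\mathcal{P}^n_c$. It then remains to identify the graphs of such $h$, up to horizontal translation, with the curves $x_{n+1}=h_c(x_n)$.

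For this identification I use a first integral. Since $h''=-n(1+h'^2)/h<0$, $h$ is strictly concave. Differentiating $h^{2n}(1+h'^2)$ gives
\[
2nh^{2n-1}h'(1+h'^2)+2h^{2n}h'h''=2h^{2n-1}h'\bigl(n(1+h'^2)+hh''\bigr)=0,
\]
so $h^{2n}(1+h'^2)=c$ for some constant $c>0$. Hence $h\le c^{1/(2n)}$, and the only point where $h'$ can vanish is where $h=c^{1/(2n)}$.

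I expect the one step needing care to be showing that every solution reaches its critical point. On a maximal interval of definition, $h'$ is strictly decreasing, so if $h'$ never vanished it would have a constant sign, say $h'>0$ (the case $h'<0$ is symmetric). Then $h$ is increasing and bounded by $c^{1/(2n)}$, and $h''\le -n/c^{1/(2n)}$ is bounded away from $0$. Consequently $h'$ must reach $0$ in finite forward time, before any blow-up: $h$ is bounded above, and $h$ stays bounded below away from $0$ while increasing, so the solution cannot leave the domain of the ODE. So $h'(t_0)=0$ at some $t_0$, and there $h(t_0)=c^{1/(2n)}$. By uniqueness for the ODE, $h(t)=h_c(t-t_0)$. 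This uses that the maximal solution $h_c$ is the unique solution with those initial data, and that our $h$, being defined on an interval, is a restriction of that maximal solution.

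The last step is to absorb the constants. The constant values of $f_1,\dots,f_{n-1}$ are already included in $h$, and the shift $t_0$ is the horizontal translation $x_n\mapsto x_n-t_0$. So $M$ is an open subset of $\mathcal{P}^n_c$. Conversely, every open subset of $\mathcal{P}^n_c$ is a type I translation graph with $f_i=0$ for $i<n$ and $f_n=h_c$, and it is minimal by Theorem \ref{th-A}. The closing remarks of the corollary need no proof beyond specializing to $n=2$ and comparing with the cited statements; in particular $\mathcal{P}^n_c$ is a counterexample to \cite[Theorem 3.2]{Se13} for $n\ge3$.
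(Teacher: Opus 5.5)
Your proposal is correct and is essentially the paper's argument: the corollary is obtained from Theorem \ref{th-A} with $\alpha=-n$ together with Proposition \ref{pr-cat}, whose first integral $h^{2n}(1+h'^2)=c$ is the one you derive. The only difference is that you prove directly that every maximal solution has a critical point, and hence is a translate of $h_c$, a fact the paper takes from \cite[Theorem 3]{Lo18}; your argument uses concavity and the bounds $h(t_1)\le h\le c^{1/(2n)}$, $0<h'\le h'(t_1)$ for $t\ge t_1$, and the bound on $h'$, which you leave implicit, is also needed for the extension step.
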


\begin{corollary}\label{co-II}
A translation hypersurface of type II in $\mathbb{H}^{n+1}$, $n\geq 2$, is minimal if and only if it is an open subset of one of the following:
\begin{enumerate}
\item a vertical hyperplane, which is totally geodesic;
\item when $n\geq 3$, a vertical cylinder $\{(q,t) : q\in\Sigma\times\mathbb{R}^{n-3},\ t>0\}$, where $\Sigma$ is a Scherk minimal translation surface in a horizontal $3$-space; up to isometries and dilations, $\Sigma$ is the surface $x_3=\log(\cos x_2/\cos x_1)$;
\item the image of a minimal parabolic cylinder $\mathcal{P}^n_c$ by a rotation about a vertical axis and a horizontal translation.
\end{enumerate}
The hypersurfaces of item (2) are not totally geodesic and are not parabolic cylinders.
\end{corollary}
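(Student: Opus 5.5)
The statement to prove is Corollary \ref{co-II}. I would deduce it from Theorem \ref{th-B} with $\alpha=-n$, using the equivalence recalled after \eqref{eq-hyp}: a hypersurface of $\mathbb{R}^{n+1}_+$ is minimal in $\mathbb{H}^{n+1}$ if and only if it is $(-n)$-singular minimal with respect to $e_{n+1}$. A type II hypersurface $x_n=f_1(x_1)+\cdots+f_{n-1}(x_{n-1})+g(x_{n+1})$ is exactly the hypersurface of Theorem \ref{th-B} with $y=x_n$ and $z=x_{n+1}$. So it suffices to translate the two alternatives of Theorem \ref{th-B} into the geometric descriptions of items (1)--(3).

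\emph{Case (1) of Theorem \ref{th-B}.} Here $M$ is an open subset of $M_0\times\mathbb{R}e_{n+1}$, restricted to $z>0$. If $M_0$ is a hyperplane of $\{z=0\}$, then $M$ is a vertical hyperplane. Such hyperplanes meet the ideal boundary orthogonally, so they are totally geodesic; this gives item (1). Otherwise $n\ge 3$ and $M_0=\Sigma\times\mathbb{R}^{n-3}$, with $\Sigma$ a Scherk minimal translation surface. By Scherk's theorem, in the form of \cite{DVZ}, $\Sigma$ is, up to Euclidean isometries and dilations of the horizontal $3$-space, the surface $x_3=\log(\cos x_2/\cos x_1)$. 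Horizontal isometries and dilations of $\mathbb{R}^{n+1}_+$ are hyperbolic isometries, so this gives item (2).

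\emph{Case (2) of Theorem \ref{th-B}.} Theorem \ref{th-B} says $M$ is an open subset of a cylinder over an $\alpha$-catenary with rulings orthogonal to $e_{n+1}$. The rulings are the $(n-1)$-dimensional horizontal subspace along which $y-\sum m_ix_i$ is constant. I would make this explicit as follows.
\begin{itemize}
\item Choose a rotation about a vertical axis, followed by a horizontal translation, that sends the unit horizontal vector proportional to $(-m_1,\ldots,-m_{n-1},1)$ to $e_n$.
\item After this map, $M$ becomes a graph $x_{n+1}=h(x_n)$, where $h$ is the local inverse of $s\mapsto g(s)/\sqrt K$.
\item The relation $g'^2/(K+g'^2)=\lambda z^{2n}$ rewrites, with $h'=\sqrt K/g'$, as $1/(1+h'^2)=\lambda h^{2n}$.
\item Differentiating this first integral, or invoking Theorem \ref{th-B}'s statement that the profile is a $(-n)$-catenary, gives \eqref{eq-cyl}.
\end{itemize}
The main point is to match the normalization. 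The maximum of $h$ occurs where $h'=0$, that is at $h^{2n}=1/\lambda$. After a horizontal translation in $x_n$, this means $h(0)=c^{1/(2n)}$ and $h'(0)=0$ with $c=1/\lambda$, so $h=h_c$ by uniqueness for \eqref{eq-cyl}. I must also check that every solution reaches $h'=0$. Since $h^{2n}\le 1/\lambda$ and $hh''<0$, $h$ is concave; an open subset of the graph therefore lies on the maximal solution through its critical point. This gives item (3).

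\emph{Converse and final claims.} Each listed hypersurface is of type II after a suitable choice of coordinates. For item (3) I rotate back, which gives $m_i$ and $g$ satisfying Theorem \ref{th-B}(2). Theorem \ref{th-B} then shows it is minimal; alternatively one checks directly that $H=0$ and $N_{n+1}=0$ in \eqref{eq-hyp} for items (1) and (2).

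For the last sentence of the corollary:
\begin{itemize}
\item The Scherk cylinders are not totally geodesic. Since $N_{n+1}=0$, \eqref{eq-hyp} shows their hyperbolic second fundamental form is conformal to the Euclidean one, which is nonzero because $\Sigma$ is not planar.
\item They are not parabolic cylinders. Those have non-vertical normal somewhere: the profile $h_c$ is non-constant, so $N_{n+1}\neq0$ at some point, whereas the Scherk cylinders have $N_{n+1}\equiv 0$.
\end{itemize}

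I expect the only delicate step to be the normalization argument in Case (2): identifying the profile with $h_c$ exactly, including the horizontal translation and the claim that every local solution extends to contain the critical point.
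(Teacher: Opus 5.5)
Deriving items (1)--(3) from Theorem~\ref{th-B} with $\alpha=-n$ and Proposition~\ref{pr-cat} is the paper's route, and your normalization $c=1/\lambda$ is correct. One small point: concavity alone does not force a critical point. What does force it is the uniform bound $h''=-n(1+h'^2)/h\le -n\lambda^{1/(2n)}$ along the maximal solution, or simply the statement of Proposition~\ref{pr-cat}. The real problem is your argument that the Scherk cylinders are not parabolic cylinders.

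The quantity $N_{n+1}$ is the vertical component of the Euclidean unit normal. It is preserved by horizontal translations, rotations about vertical axes and dilations. It is not preserved by hyperbolic isometries that move the point at infinity. For example, an inversion in a sphere centred on $\{x_{n+1}=0\}$ sends a vertical hyperplane, where $N_{n+1}\equiv 0$, onto a hemisphere, where $N_{n+1}\not\equiv 0$. So comparing $N_{n+1}\equiv 0$ with $N_{n+1}\neq 0$ only shows that a Scherk cylinder is not an open subset of one of the hypersurfaces in the specific position of item (3). It does not show that it is not congruent to a parabolic cylinder, which is what the paper proves.

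Your argument also has a local flaw. ``$N_{n+1}\neq 0$ at some point'' would not suffice even for the weaker claim, since an open subset can avoid that point. What is true, and needed, is that $N_{n+1}=(1+h_c'^2)^{-1/2}>0$ at every point of $\mathcal{P}^n_c$. This holds because $\mathcal{P}^n_c$ is a graph over a horizontal hyperplane, not because $h_c$ is non-constant.

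The fix is to use an isometry invariant, namely the hyperbolic principal curvatures $x_{n+1}\kappa_i+N_{n+1}$, as the paper does. On $\mathcal{P}^n_c$ they are:
\begin{itemize}
\item $N_{n+1}$ along the $n-1$ ruling directions;
\item $h_ch_c''(1+h_c'^2)^{-3/2}+N_{n+1}=(1-n)N_{n+1}$ along the profile, by \eqref{eq-cyl}.
\end{itemize}
None of these vanishes anywhere. On the Scherk cylinder $N_{n+1}=0$, and the vertical direction gives principal curvature $0$ at every point (so do the $\mathbb{R}^{n-3}$ directions). Hence no open piece of one is congruent to an open piece of the other. Your ``not totally geodesic'' argument is fine, because it does use the invariant hyperbolic shape operator, which equals $x_{n+1}$ times the Euclidean one there.
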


For $n=2$, Corollary \ref{co-II} agrees with \cite[Theorem 3.3]{Se13}. The minimal hypersurfaces of item (2) are instances of the following simple fact, which follows from \eqref{eq-hyp} since $N_{n+1}=0$: if $\Sigma$ is a minimal hypersurface of $\mathbb{R}^n$, then the vertical cylinder $\{(q,t) : q\in\Sigma,\ t>0\}$ is minimal in $\mathbb{H}^{n+1}$.

The paper is organized as follows. Section \ref{sec-pre} contains the equations and the properties of the $\alpha$-catenaries that we need. Theorems \ref{th-A} and \ref{th-B} are proved in Sections \ref{sec-A} and \ref{sec-B}, and the consequences in hyperbolic space are discussed in Section \ref{sec-hyp}.

\section{Preliminaries}\label{sec-pre}

Throughout the paper we use the following convention. The functions involved are real analytic: a singular minimal hypersurface satisfies an elliptic equation with analytic coefficients, so it is real analytic, and so are the functions $f_i$ and $g$, which are obtained by restricting its defining function to coordinate lines. Consequently, each of the functions $f_i'$, $f_i''$, $f_i'''$, $g'$, \ldots\ either vanishes identically or has isolated zeros, and all identities obtained on an open set extend to the whole domain. All the arguments below are local, and we work on an open set where the functions which do not vanish identically have no zeros.

We write $a_i=f_i'$, $b_i=f_i''$ and $W^2=1+\sum_i a_i^2$ for a graph $x_{n+1}=F=f_1(x_1)+\cdots+f_n(x_n)$ with upward unit normal $N=(-a_1,\ldots,-a_n,1)/W$. Its mean curvature is \cite{Se13}
\[
nH=\frac{1}{W^3}\sum_{i=1}^n\bigl(W^2-a_i^2\bigr)b_i,
\]
and \eqref{eq-sm} with $\mathbf{a}=e_{n+1}$ becomes
\begin{equation}\label{eq-I}
F\,B=\alpha\,W^2,\qquad B=\sum_{i=1}^n\bigl(W^2-a_i^2\bigr)b_i .
\end{equation}

For a hypersurface $y=\sum_{i=1}^{n-1}f_i(x_i)+g(z)$ as in Theorem \ref{th-B}, we write $p=g'$, $q=g''$ and
\[
K=1+\sum_{i=1}^{n-1}a_i^2,\qquad S=\sum_{i=1}^{n-1}b_i,\qquad T=\sum_{i=1}^{n-1}a_i^2b_i .
\]
This is a graph over the hyperplane $\{y=0\}$, with unit normal $N=(-a_1,\ldots,-a_{n-1},1,-p)/W$ and $W^2=K+p^2$. The same formula for the mean curvature gives $nHW^3=W^2S-T+Kq$, and \eqref{eq-sm} with $\langle N,e_{n+1}\rangle=-p/W$ and $\langle p,e_{n+1}\rangle=z$ becomes
\begin{equation}\label{eq-II}
z\bigl[(K+p^2)S-T+Kq\bigr]=-\alpha\,p\,(K+p^2).
\end{equation}
For $\alpha=-n$, \eqref{eq-I} and \eqref{eq-II} are the equations $H_h=0$ obtained from \eqref{eq-hyp}; we have checked both with a computer algebra system.

\begin{proposition}\label{pr-cat}
Let $\alpha\neq 0$. Every non-constant positive solution of \eqref{eq-cat} satisfies $1+h'^2=Ch^{2\alpha}$ for a constant $C>0$. If $\alpha<0$, every maximal solution is, up to a translation of the variable, even and concave, defined on a bounded interval $(-r,r)$, and satisfies $h\to 0$, $|h'|\to\infty$ at $\pm r$. For $\alpha=-n$ and $h(0)=c^{1/(2n)}$, $h'(0)=0$, we have
\[
h^{2n}(1+h'^2)=c,\qquad r=r_c=c^{1/(2n)}\int_0^1\frac{t^n\,dt}{\sqrt{1-t^{2n}}}=\frac{c^{1/(2n)}}{2n}\,B\Bigl(\frac{n+1}{2n},\frac12\Bigr).
\]
\end{proposition}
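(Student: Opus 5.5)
Since $h>0$ and $h$ is non-constant, I will obtain a first integral by considering $\phi=(1+h'^2)h^{-2\alpha}$. Its derivative is
\[
\phi'=2h'h^{-2\alpha-1}\bigl(hh''-\alpha(1+h'^2)\bigr),
\]
which vanishes by \eqref{eq-cat}. So $\phi\equiv C$ on the interval of definition, and $C>0$ because $1+h'^2\geq 1$. This gives $1+h'^2=Ch^{2\alpha}$. Note that \eqref{eq-cat} also forces $h''\neq 0$ everywhere: $h''$ has the sign of $\alpha$, since $1+h'^2>0$ and $h>0$.

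Now let $\alpha<0$. Then $h''<0$, so $h$ is strictly concave. The first integral gives $h^{2\alpha}\geq 1/C$, that is, $h\leq C^{1/(2\alpha)}=:h_0$, with equality exactly where $h'=0$.
\begin{itemize}
\item \textbf{Existence of a maximum.} Suppose $h'$ never vanished, say $h'>0$. Then $h$ would be increasing and bounded by $h_0$. On the other hand, $|h'|=\sqrt{Ch^{2\alpha}-1}$ stays bounded below while $h$ stays away from $h_0$. If the maximal solution extended to $+\infty$, concavity and boundedness would force $h'\to 0$, hence $h\to h_0$. Concavity with $h'>0$ on a half-line together with $h$ bounded is possible, so I will instead rule out the solution stopping before reaching $h_0$. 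Since $h$ and $h'$ remain bounded and $h$ stays bounded away from $0$ as $h$ increases, the ODE $h''=\alpha(1+h'^2)/h$ has bounded right-hand side, so the solution continues. Hence $h$ reaches the value $h_0$ with $h'=0$ at a finite point, because $\int dh/\sqrt{Ch^{2\alpha}-1}$ converges near $h_0$: the integrand behaves like $(h_0-h)^{-1/2}$.
\item \textbf{Symmetry.} Translate the variable so that $h'(0)=0$. Then $h(x)$ and $h(-x)$ solve the same initial value problem, so $h$ is even by uniqueness.
\item \textbf{Behaviour at the endpoints.} For $x>0$ we have $h'<0$, and $x=\int_{h}^{h_0}ds/\sqrt{Cs^{2\alpha}-1}$. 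As $h\to 0^+$, the integrand behaves like $s^{-\alpha}/\sqrt C$, which is integrable since $-\alpha>0$. So $h$ reaches $0$ at a finite $r$, and $|h'|=\sqrt{Ch^{2\alpha}-1}\to\infty$ there. The solution cannot be continued further while remaining positive, so $(-r,r)$ is the maximal interval.
\end{itemize}

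For $\alpha=-n$, the first integral reads $h^{2n}(1+h'^2)=C^{-1}$. Evaluating at $0$ gives $c$, using $h(0)^{2n}=c$. Then
\[
r=\int_0^{h_0}\frac{ds}{\sqrt{c\,s^{-2n}-1}}=\int_0^{h_0}\frac{s^n\,ds}{\sqrt{c-s^{2n}}}.
\]
Substituting $s=c^{1/(2n)}t$ gives $c^{1/(2n)}\int_0^1 t^n(1-t^{2n})^{-1/2}dt$. The further substitution $u=t^{2n}$ turns this into $\frac{1}{2n}\int_0^1 u^{\frac{n+1}{2n}-1}(1-u)^{-1/2}du$, which yields the Beta value.

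The main point needing care is the maximality and continuation argument, namely that the solution must reach its maximum before it can blow up. I will handle it cleanly with the explicit quadrature $x=\int ds/\sqrt{Cs^{2\alpha}-1}$ on each monotone branch, rather than through general ODE arguments.
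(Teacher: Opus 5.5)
Your proof is correct. The first integral and the computation of $r_c$ (quadrature on the decreasing branch, then $s=c^{1/(2n)}t$ and $u=t^{2n}$) are essentially the paper's. Your $\phi=(1+h'^2)h^{-2\alpha}$ has vanishing derivative on the whole interval. This spares you the paper's step of integrating $(\log(1+h'^2))'=2\alpha h'/h$ away from the zeros of $h'$ and extending by continuity.

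The real difference is the qualitative description for $\alpha<0$. The paper does not prove it; it cites L\'opez [Lo18, Theorem 3]. You derive it from four ingredients: strict concavity, uniqueness under $x\mapsto -x$, the continuation principle, and the finiteness of $\int ds/\sqrt{Cs^{2\alpha}-1}$ near $h_0$ and near $0$. This makes the proposition self-contained.

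There are three things to tidy.

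\textbf{The exponent is inverted.} Since $2\alpha<0$, $h^{2\alpha}\geq 1/C$ gives $h\le h_0=C^{-1/(2\alpha)}$. For $\alpha=-n$ the first integral is therefore $h^{2n}(1+h'^2)=C$, so $C=c$. The paper's proof makes the same inversion when it writes $C=1/c$. Nothing downstream changes, because you fix the constant by evaluating at $0$.

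\textbf{The existence-of-a-maximum bullet is roundabout.} Because $h\le h_0$ and $1+h'^2\ge 1$, you have $h''=\alpha(1+h'^2)/h\le\alpha/h_0<0$ everywhere. On an increasing branch, $h\in[h(x_*),h_0]$ and $h'$ is bounded, so the solution continues. The uniform bound then forces $h'$ to vanish after a bounded time.

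\textbf{The endpoint bullet needs one more sentence.} You should say explicitly why the decreasing branch survives until $h\to 0$. If $h$ had a positive limit at the right end of the maximal interval, $(h,h')$ would stay in a compact subset of $\{h>0\}$, and the solution would extend. Hence that end is exactly $r=\int_0^{h_0}ds/\sqrt{Cs^{2\alpha}-1}$, and not some smaller value.
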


\begin{proof}
Where $h'\neq 0$, \eqref{eq-cat} gives $\bigl(\log(1+h'^2)\bigr)'=2h'h''/(1+h'^2)=2\alpha h'/h$, hence $1+h'^2=Ch^{2\alpha}$; since $h$ is not constant, the zeros of $h'$ are isolated and the identity holds everywhere by continuity. The qualitative properties for $\alpha<0$ are \cite[Theorem 3]{Lo18}. For $\alpha=-n$, the maximum of $h$ is $h(0)=C^{-1/(2n)}$, so $C=1/c$. On $(0,r)$ we have $h'<0$ and $y=\int_{h(y)}^{c^{1/(2n)}}s^n\,ds/\sqrt{c-s^{2n}}$; the substitutions $s=c^{1/(2n)}t$ and $u=t^{2n}$ give the value of $r_c$.
\end{proof}

For $n=2$ we recover $r_c\approx 0.5991\,c^{1/4}$. Figure \ref{fig-prof} shows the profiles for $n=2,3,4$.

\begin{figure}[ht]
\centering
\includegraphics[width=0.7\textwidth]{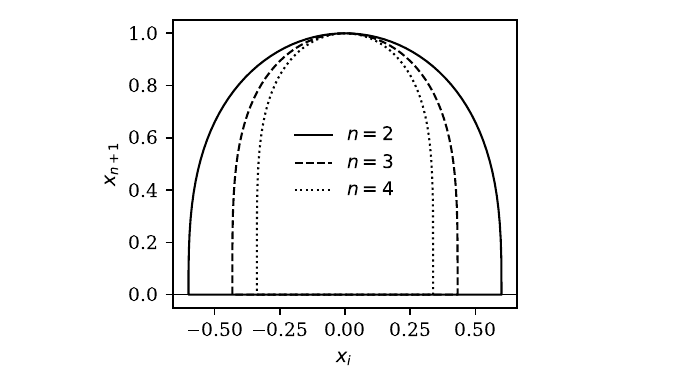}
\caption{Profiles of the minimal parabolic cylinders $\mathcal{P}^n_1$ of $\mathbb{H}^{n+1}$, solutions of $h^{2n}(1+h'^2)=1$, for $n=2,3,4$.}
\label{fig-prof}
\end{figure}

\section{Proof of Theorem \ref{th-A}}\label{sec-A}

Suppose that $M$ satisfies \eqref{eq-I}. Let $J$ be the set of indices $i$ such that $f_i$ is not constant, and $k$ its cardinality. The functions $f_i$ with $i\notin J$ only contribute a constant to $F$ and do not appear in $W$ and $B$.

\emph{Step 1: $k\geq 1$.} If $k=0$, then $F$ is constant, $B=0$ and \eqref{eq-I} gives $\alpha=0$.

\emph{Step 2: no $f_i$ is affine and non-constant.} Suppose that $f_j(x_j)=ax_j+b$ with $a\neq 0$. Then $b_j=0$ and $a_j=a$, so $B$ and $W$ do not depend on $x_j$, while $F$ is affine in $x_j$ with leading coefficient $a$. Comparing the coefficients of $x_j$ in \eqref{eq-I} gives $aB=0$, hence $B=0$ and $\alpha W^2=0$, a contradiction. Consequently $b_i\neq 0$ for $i\in J$.

\emph{Step 3: if $k\geq 2$, then $b_i'\neq 0$ for every $i\in J$.} Suppose that $b_i=\beta_i$ is a non-zero constant for some $i\in J$. Then $a_i$ and $f_i$ are polynomials of degree $1$ and $2$ in $x_i$. Writing $A=1+\sum_{l\neq i}a_l^2$, we have $W^2=A+a_i^2$ and
\[
B=A\beta_i+\sum_{l\neq i}(A-a_l^2)b_l+a_i^2\sigma_i,\qquad \sigma_i=\sum_{l\in J,\,l\neq i}b_l .
\]
As a function of $x_i$, the left-hand side of \eqref{eq-I} is a polynomial whose coefficient of $x_i^4$ is $\beta_i^3\sigma_i/2$, and the right-hand side is a polynomial of degree $2$. Therefore $\sigma_i=0$, and differentiating $\sigma_i$ with respect to $x_l$ gives $b_l'=0$ for every $l\in J\setminus\{i\}$. Thus every $b_l$, $l\in J$, is a non-zero constant $\beta_l$, and the same argument gives $\sum_{l\in J,\,l\neq i}\beta_l=0$ for every $i\in J$. If $k=2$, this says that the other constant vanishes, a contradiction. If $k\geq 3$, summing over $i\in J$ gives $(k-1)\sum_{l\in J}\beta_l=0$, hence $\beta_i=\sum_{l\in J}\beta_l-\sigma_i=0$ for every $i$, again a contradiction.

\emph{Step 4: $k\leq 2$.} Suppose $k\geq 3$ and let $1,2,3\in J$. We differentiate \eqref{eq-I} with respect to $x_1$, $x_2$ and $x_3$. Since $\partial_i\partial_jF=0$ and $W^2-a_l^2$ is a sum of functions of one variable not depending on $x_l$, we have $\partial_1\partial_2\partial_3(W^2)=0$, $\partial_1\partial_2\partial_3B=0$ and $\partial_2\partial_3B=2(a_3b_3b_2'+a_2b_2b_3')$. We obtain
\[
a_1\bigl(a_2b_2b_3'+a_3b_3b_2'\bigr)+a_2\bigl(a_3b_3b_1'+a_1b_1b_3'\bigr)+a_3\bigl(a_1b_1b_2'+a_2b_2b_1'\bigr)=0 .
\]
By Steps 2 and 3, $a_ib_ib_i'\neq 0$. With $u_i=b_i'/(a_ib_i)$ and $v_i=1/b_i$, the division by $a_1b_1a_2b_2a_3b_3$ gives
\begin{equation}\label{eq-uv}
v_1(u_2+u_3)+v_2(u_3+u_1)+v_3(u_1+u_2)=0 .
\end{equation}
Note that $v_i'=-b_i'/b_i^2\neq 0$, and that $v_2+v_3$ is not identically zero, since otherwise $v_2$ and $v_3$ would be constant. Differentiating \eqref{eq-uv} with respect to $x_1$ gives $v_1'(u_2+u_3)+u_1'(v_2+v_3)=0$, that is,
\[
\frac{u_1'}{v_1'}=-\frac{u_2+u_3}{v_2+v_3}.
\]
The left-hand side depends only on $x_1$ and the right-hand side only on $(x_2,x_3)$, so both are equal to a constant $\lambda_1$. Hence $u_1=\lambda_1v_1+\mu_1$ and, in the same way, $u_i=\lambda_iv_i+\mu_i$ for $i=2,3$. Plugging into $u_2+u_3=-\lambda_1(v_2+v_3)$ gives
\[
(\lambda_2+\lambda_1)v_2+(\lambda_3+\lambda_1)v_3+\mu_2+\mu_3=0,
\]
and since $v_2$ and $v_3$ are non-constant functions of different variables, $\lambda_2=\lambda_3=-\lambda_1$ and $\mu_2=-\mu_3$. The same argument with the roles of the indices exchanged gives $\lambda_i=-\lambda_j$ and $\mu_i=-\mu_j$ for all $i\neq j$ in $\{1,2,3\}$. Hence $\lambda_1=-\lambda_2=\lambda_3=-\lambda_1$ and similarly for $\mu$, so all $\lambda_i$ and $\mu_i$ vanish. Then $u_i=0$, that is, $b_i'=0$, contradicting Step 3.

\emph{Step 5: $k\neq 2$.} Suppose $J=\{1,2\}$ and write $f=f_1$, $g=f_2$, $c=\sum_{i\geq 3}f_i$. Equation \eqref{eq-I} reads $(c+f+g)\bigl[(1+g'^2)f''+(1+f'^2)g''\bigr]=\alpha(1+f'^2+g'^2)$. Let
\[
A_1=\frac{1}{1+f'^2},\quad A_2=\frac{1}{1+g'^2},\quad \varphi=\frac{f''}{1+f'^2},\quad \psi=\frac{g''}{1+g'^2}.
\]
Multiplying the equation by $A_1A_2$ gives
\[
(c+f+g)(\varphi+\psi)=\alpha\,(A_1+A_2-A_1A_2).
\]
Since $A_1'=-2f'f''A_1^2$ and $A_2'=-2g'g''A_2^2$, differentiation with respect to $x_1$ and $x_2$ gives $f'\psi'+g'\varphi'=-\alpha A_1'A_2'=-4\alpha f'g'(\varphi A_1)(\psi A_2)$, and after division by $f'g'$,
\[
\frac{\varphi'}{f'}+\frac{\psi'}{g'}=-4\alpha\,(\varphi A_1)(\psi A_2).
\]
Differentiating with respect to $x_1$ and $x_2$ gives $(\varphi A_1)'(\psi A_2)'=0$. Suppose, for instance, that $\varphi A_1=f''/(1+f'^2)^2$ is a constant $\kappa$; then $\kappa\neq 0$ by Step 2. The last identity shows that $\varphi'/f'$ depends only on $x_2$, hence is a constant. Since $\varphi=\kappa(1+f'^2)$, we have $\varphi'/f'=2\kappa f''$, so $f''$ is constant, and then $f''=\kappa(1+f'^2)^2$ forces $f'$ to be constant, that is, $f''=0$, a contradiction.

\emph{Step 6: conclusion.} We have $k=1$, say $J=\{1\}$, and \eqref{eq-I} reduces to $h\,h''=\alpha(1+h'^2)$ for $h=F$, which is \eqref{eq-cat}. Conversely, if $F=h(x_1)$ with $h$ a solution of \eqref{eq-cat}, then $W^2=1+h'^2$, $B=h''$ and \eqref{eq-I} holds. This proves Theorem \ref{th-A}. \qed

\begin{remark}
For $n=2$, Theorem \ref{th-A} is \cite[Theorem 4]{Lo18}, for which only a reference to the method of \cite{Lo16} was given. Step 2 completes the claim at the beginning of the proof of \cite[Theorem 3.2]{Se13}, Step 3 corresponds to its Cases (1) and (2), and Step 4 is a simplified version of its Case (3).
\end{remark}

\section{Proof of Theorem \ref{th-B}}\label{sec-B}

We first check that the hypersurfaces of Theorem \ref{th-B} are $\alpha$-singular minimal. In case (1), $p=0$ and \eqref{eq-II} reduces to $KS-T=\sum_i(K-a_i^2)b_i=0$, which is the equation of minimal translation hypersurfaces of $\mathbb{R}^n$. In case (2), $S=T=0$, $K$ is constant and \eqref{eq-II} reads
\begin{equation}\label{eq-II2}
zKq=-\alpha p(K+p^2).
\end{equation}
Where $p\neq 0$, this is equivalent to $\bigl(\log\frac{p^2}{K+p^2}\bigr)'=\frac{2Kq}{p(K+p^2)}=-\frac{2\alpha}{z}$, whose solutions are $p^2/(K+p^2)=\lambda z^{-2\alpha}$. To identify the hypersurface, let $\nu=(-m_1,\ldots,-m_{n-1},1,0)/\sqrt{K}$ and $s=\langle(x,y,z),\nu\rangle$, so that $M$ is the cylinder with rulings orthogonal to $\nu$ and $e_{n+1}$ over the curve $s=\ell(z)=\bigl(g(z)+\sum d_i\bigr)/\sqrt{K}$ of the $(s,z)$-plane. Then $\ell'^2=\lambda z^{-2\alpha}/(1-\lambda z^{-2\alpha})$, and the inverse function $z=h(s)$ satisfies $1+h'^2=\lambda^{-1}h^{2\alpha}$. By Proposition \ref{pr-cat}, $h$ is an $\alpha$-catenary. In case (1), the classification follows from the theorem of Dillen, Verstraelen and Zafindratafa \cite{DVZ}, see also \cite[Theorem 2.1]{Se13}: a minimal translation hypersurface of $\mathbb{R}^n$ is a hyperplane or, if $n\geq 3$, a cylinder over a Scherk minimal translation surface.

Conversely, suppose that $M$ satisfies \eqref{eq-II}. If $p=0$ we are in case (1). From now on $p\neq 0$. Dividing \eqref{eq-II} by $z$, we write it as
\begin{equation}\label{eq-sep}
U+S\,P+K\,R+Q=0,\qquad U=SK-T,
\end{equation}
where $U,S,K$ depend only on $x=(x_1,\ldots,x_{n-1})$, and
\[
P=p^2,\qquad R=q+\frac{\alpha p}{z},\qquad Q=\frac{\alpha p^3}{z}
\]
depend only on $z$. Differentiating \eqref{eq-sep} with respect to $x_i$ gives
\begin{equation}\label{eq-sepi}
U_i+S_iP+K_iR=0,\qquad 1\leq i\leq n-1,
\end{equation}
where the subscript denotes the partial derivative with respect to $x_i$.

\emph{Case 1: the functions $1,P,R$ are linearly independent.} Then \eqref{eq-sepi} gives $K_i=2a_ib_i=0$ for every $i$, hence every $f_i$ is affine. Then $S=T=0$, $K$ is constant, and \eqref{eq-sep} is \eqref{eq-II2}, so we are in case (2).

\emph{Case 2: $P$ is constant.} Then $p$ is a non-zero constant, $q=0$, and \eqref{eq-sep} reads $U+SP+\alpha p(K+p^2)/z=0$. Differentiating with respect to $z$ gives $\alpha p(K+p^2)=0$, which is impossible.

\emph{Case 3: $P$ is not constant and $1,P,R$ are linearly dependent.} Then $R=mP+r$ for some constants $m,r$, and \eqref{eq-sep} becomes
\[
(U+rK)+(S+mK)P+Q=0 .
\]
Differentiating with respect to $x_i$ and using that $P$ is not constant, we get that $U+rK=c_1$ and $S+mK=c_2$ are constants, and then $c_1+c_2P+Q=0$. With $\nu=-\alpha\neq 0$, the conditions on $p$ are
\begin{equation}\label{eq-sys}
C(p,z):=\nu p^3-c_2zp^2-c_1z=0,\qquad q=mp^2+r+\frac{\nu p}{z}.
\end{equation}

Suppose first that $c_1=0$. Then $p^2(\nu p-c_2z)=0$ and, since $p\neq 0$, $p=c_2z/\nu$ with $c_2\neq 0$. The second equation of \eqref{eq-sys} gives $c_2/\nu=mc_2^2z^2/\nu^2+r+c_2$, so $m=0$ and $r=c_2/\nu-c_2$. We return to the horizontal variables. The identity $S=c_2$ shows that each $b_i$ is a constant $\beta_i$, with $\sum_i\beta_i=c_2$. The identity $U+rK=0$ reads $T=(c_2+r)K$, that is,
\[
\sum_{i=1}^{n-1}\beta_ia_i^2=\frac{c_2}{\nu}\Bigl(1+\sum_{i=1}^{n-1}a_i^2\Bigr).
\]
For the indices with $\beta_i\neq 0$, $a_i$ is a non-constant affine function of $x_i$, and for the other indices $a_i$ is constant. The identity is thus a polynomial identity in the independent variables $a_i$, $\beta_i\neq 0$, and its constant term gives $0=(c_2/\nu)\bigl(1+\sum_{\beta_i=0}a_i^2\bigr)$, a contradiction.

Suppose now that $c_1\neq 0$. If $C_p=3\nu p^2-2c_2zp$ vanishes identically, then $p=2c_2z/(3\nu)$ and $C=0$ gives $-c_2zp^2/3=c_1z$, so that $P$ is constant, which is excluded. Hence $C_p\neq 0$, and differentiating $C(p(z),z)=0$ gives $p'=(c_2p^2+c_1)/C_p$. Comparing with the second equation of \eqref{eq-sys}, we see that $p(z)$ is, for every $z$, a common root of $C(\cdot,z)$ and of
\[
D(p,z)=(c_2p^2+c_1)z-\bigl((mp^2+r)z+\nu p\bigr)\bigl(3\nu p^2-2c_2zp\bigr).
\]
Therefore the resultant $\mathrm{Res}_p(C,D)$, which is a polynomial in $z$, vanishes identically. A direct computation shows that its lowest order term is $-c_1^3\nu^4(3\nu-1)^3z^3$. If $\nu\neq 1/3$, this contradicts $c_1\neq 0$. If $\nu=1/3$, the vanishing of the other coefficients gives successively $r=2c_2/3$, $9c_1m=4c_2^2$ and $c_2^3m(3c_1m-2c_2^2)^2=0$, which force $c_2=m=r=0$. Then $S=c_2=0$, so the $b_i$ are constants $\beta_i$ with $\sum\beta_i=0$, and $U=c_1$ gives $\sum_i\beta_ia_i^2=-c_1$. As above, the coefficient of $a_i^2$ gives $\beta_i=0$ for every $i$, hence $c_1=0$, a contradiction. This concludes the proof of Theorem \ref{th-B}. \qed

\begin{remark}
The resultant computation, as well as the other symbolic identities used in the proofs, has been checked with the computer algebra system SymPy.
\end{remark}

\section{Minimal translation hypersurfaces in hyperbolic space}\label{sec-hyp}

Corollaries \ref{co-I} and \ref{co-II} follow from Theorems \ref{th-A} and \ref{th-B} with $\alpha=-n$, and from Proposition \ref{pr-cat}. We add some remarks on the examples.

The minimal parabolic cylinder $\mathcal{P}^n_c$ is the orbit of the profile curve $\{(0,\ldots,0,s,h_c(s)) : |s|<r_c\}$ under the parabolic translations in the directions $x_1,\ldots,x_{n-1}$. It is properly embedded, and its profile curve has infinite hyperbolic length at both ends, since its hyperbolic arc length element $\sqrt{1+h_c'^2}\,dh/(h_c|h_c'|)$ behaves like $dh/h$ as $h\to 0$. Its asymptotic boundary consists of the two parallel hyperplanes $\{x_n=\pm r_c,\ x_{n+1}=0\}$ of $\partial_\infty\mathbb{H}^{n+1}$ together with the point at infinity. The hyperbolic principal curvatures of $\mathcal{P}^n_c$ are $x_{n+1}\kappa_i+N_{n+1}$, where $\kappa_i$ are the Euclidean ones; along the $n-1$ ruling directions they equal $N_{n+1}=1/\sqrt{1+h_c'^2}\neq 0$, so $\mathcal{P}^n_c$ is not totally geodesic. For $n=2$, $\mathcal{P}^2_c$ is the $(-2)$-catenary cylinder of \cite{Lo18} and, up to a rotation about a vertical axis, the minimal translation cylinder of \cite[Theorem 3.1]{FS}.

The hypersurfaces of Corollary \ref{co-II}, item (2), are the vertical cylinders over $\Sigma\times\mathbb{R}^{n-3}$, where $\Sigma$ is a Scherk surface. For $n=3$ and up to dilations and isometries, this is the hypersurface of $\mathbb{H}^4$
\[
x_3=\log\frac{\cos x_2}{\cos x_1},\qquad |x_1|,|x_2|<\frac{\pi}{2},\ x_4>0,
\]
which is a translation hypersurface of type II with $g$ constant; its horizontal sections are the Scherk surface of Figure \ref{fig-scherk}. Since $N_4=0$, its hyperbolic principal curvatures are $x_4\kappa_1$, $x_4\kappa_2$ and $0$, where $\kappa_1=-\kappa_2$ are the Euclidean principal curvatures of the Scherk surface, which do not vanish identically. It is therefore not totally geodesic. It is not congruent to a parabolic cylinder either: the principal curvatures of $\mathcal{P}^3_c$ are $N_4,N_4,-2N_4$ with $N_4=1/\sqrt{1+h_c'^2}\neq 0$, so none of them vanishes, whereas one principal curvature of the Scherk cylinder is identically zero.

\begin{figure}[ht]
\centering
\includegraphics[width=0.5\textwidth]{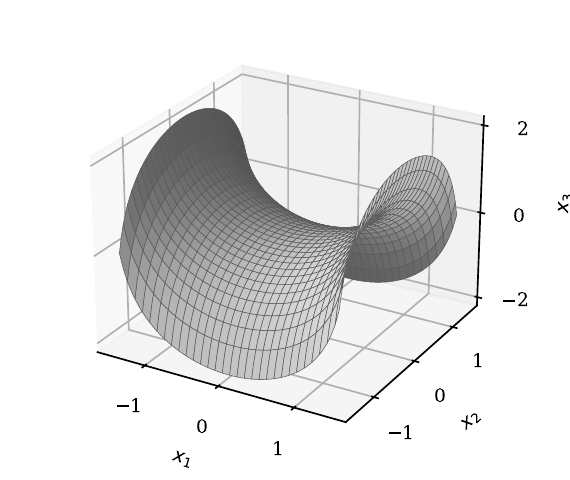}
\caption{A horizontal section $\{x_4=\mathrm{const}\}$ of the minimal hypersurface $x_3=\log(\cos x_2/\cos x_1)$ of $\mathbb{H}^4$.}
\label{fig-scherk}
\end{figure}

We conclude with some comments on the literature. For $n=2$, the classification of Corollaries \ref{co-I} and \ref{co-II} is contained in \cite[Theorem 3.3]{Se13} for type II and in \cite[Theorem 4]{Lo18} for type I, the latter being stated in terms of singular minimal surfaces. For $n\geq 3$, Corollary \ref{co-I} shows that \cite[Theorem 3.2]{Se13}, which is also quoted in \cite[Theorem 3]{LSSS}, does not hold as stated. Corollary \ref{co-II} does not seem to have been stated before for $n\geq 3$; it can also be derived from \cite[Theorem 4.1]{EEA} with $\alpha=-n$, once the case where $g$ is constant is added.

\end{document}